\newtheorem{defi}{Definition}[section]
\newtheorem{lema}[defi]{Lemma}
\newtheorem{ex}[defi]{Example}
\newcommand{\N}{\mathbb N}
\newcommand\bla{\cellcolor{gray!99}\color{white}}
\newcommand\gra{\cellcolor{gray!60}\color{white}}
\newcommand\bra{\cellcolor{gray!20}}
\title{Computation of numerical semigroups\\ by means of seeds}
\author{Maria Bras-Amorós, Julio Fernández-González}
\date{\today}
\begin{document}
\maketitle
\noindent\let\thefootnote\relax\footnotetext{\hspace{-6.5truemm}\scriptsize 
The first author was supported by the Spanish government under grant TIN2016-80250-R and by the Catalan government under grant 2014 SGR 537.\\
The second author is partially supported by the Spanish government under grant MTM2015-66180-R.}

\begin{abstract}
\noindent For the elements of a numerical semigroup which are larger than the Frobenius number, 
we introduce the definition of \emph{\,seed\,} by broadening the notion of generator.
This new concept allows us to explore the semigroup tree in an alternative efficient way, 
since the seeds of each descendant can be easily obtained from
the seeds of its parent. The paper is devoted to presenting the results which
are related to this approach, leading to a new algorithm for computing and
counting the semigroups of a given genus.
\end{abstract}

\section*{Introduction}

Let $\N_0$ be the set of non-negative integers. A \emph{numerical semigroup} is a subset $\Lambda$ 
of~\,$\N_0$ which contains~\,$0$, is closed under addition and has finite complement,~$\N_0\setminus\Lambda$.
The elements in $\N_0\!\setminus\!\Lambda$ are the \emph{gaps} of $\Lambda$,
and the number $g\!=\!g(\Lambda)$ of gaps is the \emph{genus} of~$\Lambda$.
Thus, the unique increasing bijective map 
$$\begin{array}{ccc}
\N_0\! & \!\longrightarrow\! & \!\Lambda\\
i\,\,\, & \!\longmapsto\! & \!\lambda_i
\end{array}$$ 
indexing the \emph{non-gaps}, namely the elements in $\Lambda$, 
satisfies \,$\lambda_0=0$\, and \,$\lambda_i=i+g$\, 
for any large enough positive index \,$i$.
Let \,$k\!=\!k(\Lambda)$\, denote the smallest such index.
The \emph{conductor} of $\Lambda$ is 
$$c=c(\Lambda)=\lambda_k=k+g.$$
The trivial semigroup~\,$\N_0$ corresponds to \,$c\!=\!1$, that is, to \,$g\!=\!0$.
Otherwise, the largest gap of~$\Lambda$, which is known as its \emph{Frobenius number}, is \,$c-1$.
We call $\Lambda$ \emph{ordinary} whenever \,$c$\, equals the \emph{multiplicity} $\lambda_1$,
that is, whenever \,$g=c-1$.

\vspace{1truemm}

A \emph{generator} of $\Lambda$ is a non-gap $\sigma$ 
such that $\Lambda\!\setminus\!\{\sigma\}$ is still a semigroup,
which amounts to saying that $\sigma$ is not the sum of two non-zero non-gaps.
Any semigroup of genus \,$g\!\geq\!1$\, can be uniquely obtained from
a semigroup~$\Lambda$ of genus~$g-1$ by removing a generator \,$\sigma\geq c(\Lambda)$. This allows 
one to arrange all numerical semigroups in an infinite tree, rooted at the trivial semigroup, 
such that the nodes at depth $g$ are all semigroups of genus $g$.
This tree was already introduced in \cite{oversemigroups,fundamentalgaps} 
and later considered in \cite{Br:fibonacci,Br:bounds,BrBu}.
The first nodes are shown in Figure \ref{tree}, where each semigroup is represented by its non-zero
elements up to the conductor.

\newpage

The number $n_g$ of semigroups of genus $g$ was conjectured in \cite{Br:fibonacci} to satisfy 
$$n_{g+2} \,\geq\, n_{g+1}+n_{g}$$ 
for $g\geq 0$, and to behave asymptotically like the Fibonacci sequence. 
Zhai proved the second statement in \cite{Zhai}.
The \emph{Sloane's On-line Encyclopedia of Integer Sequences}~\cite{sloane} 
stores the known portion of the sequence $n_g$.
The first values have been computed by Medeiros and Kakutani, Bras-Amor\'os \cite{Br:fibonacci}, 
Delgado \cite{Delgado}, and Fromentin and Hivert \cite{FromentinHivert}. 
Several contributions \cite{Br:bounds,BrBu,Elizalde,Zhao,BGP,BR,Kaplan,ordinarization,ODorney}
have been made to theoretically analyze the sequence.

\begin{figure}[h]
\centering
\resizebox{\textwidth}{!}{
\begin{tikzpicture}[grow'=right, every node/.style = {align=left}]
\tikzset{level 1/.style={level distance=2cm}}\tikzset{level 2/.style={level distance=2cm}}\tikzset{level 3/.style={level distance=2cm}}\tikzset{level 4+/.style={level distance=3cm}}\Tree
[.{\{1\}} [.{\{2\}} [.{\{3\}} [.{\{4\}} [.{\{5\}} [.{\{6\}} ] [.{\{5,7\}} ] [.{\{5,6,8\}} ] [.{\{5,6,7,9\}} ] [.{\{5,6,7,8,10\}} ] ] [.{\{4,6\}} [.{\{4,7\}} ] [.{\{4,6,8\}} ] [.{\{4,6,7,8,10\}} ] ] [.{\{4,5,7\}} [.{\{4,5,8\}} ] ] [.{\{4,5,6,8\}} ] ] [.{\{3,5\}} [.{\{3,6\}} [.{\{3,6,8\}} ] [.{\{3,6,7,9\}} ] ] [.{\{3,5,6,8\}} ] ] [.{\{3,4,6\}} ] ] [.{\{2,4\}} [.{\{2,4,6\}} [.{\{2,4,6,8\}} [.{\{2,4,6,8,10\}} ] ] ] ] ]  ] ]
\end{tikzpicture}}
\caption{
}\label{tree}
\end{figure}

\vspace{5truemm}

In this article we introduce the notion of \emph{seeds}, as a generalization 
of the generators of a semigroup which are greater than the Frobenius number. This new concept allows 
us to explore the 
semigroup tree in an alternative efficient way, since the seeds of each descendant
can be easily obtained from the seeds of its parent. Sections \ref{seeds} and \ref{results} are devoted
to presenting the results which are related to this approach, leading in Section \ref{rake} to a
new algorithm for computing the (number of) semigroups of a given genus. 
The running times that we obtain are shorter than for the other algorithms which can be found in the literature.

\newpage

\section{Seeds of a numerical semigroup}\label{seeds}

Let us fix a numerical semigroup $\Lambda=\left\{\lambda_i\right\}_{i\geq 0}$ 
with conductor \,$c=\!\lambda_k$ and genus~\,$g$. For any index~\,$i\geq 0$, the set
$$\Lambda_i:=\Lambda\setminus\left\{\lambda_1,\dots,\lambda_i\right\}$$
is a semigroup of genus \,$g+i$. It has conductor \,$c$\, if and only if \,$i\leq k-1$,
and it is ordinary if and only if \,$i\geq k-1$.

\vspace{1truemm}

\begin{defi}\label{seed}{\rm
An element \,$\lambda_t$ with \,$t\geq k$\, is a \,\emph{seed}\, of $\Lambda$ if
\,$\lambda_t+\lambda_i$\, is a generator of~$\Lambda_i$ for some index \,$0\leq i<k$.
We then say that the seed \,$\lambda_t$ has \emph{order} $i$, and also that \,$\lambda_t$ is an
\emph{order-$i$ seed}.}
\end{defi}

\vspace{1truemm}

A seed of $\Lambda$ is, by definition, at least as large as the conductor,
and it has order zero precisely when it is a generator of $\Lambda$.
The order-zero seeds of a semigroup of genus \,$g$\, are in bijection with its descendants 
of genus \,$g+1$\, in the semigroup tree, which are obtained by removing exactly one of those seeds.
Order-one seeds have been named {\em strong generators}\, in other references, 
such as \cite{BrBu}. They are behind the key idea of the bounds in~\cite{Br:bounds}.

\vspace{1truemm}

\begin{lema} 
\label{cardinalseeds}
Any order-$i$ seed of \,$\Lambda$ is at most \,$c+\lambda_{i+1}-\lambda_i-1$. 
In particular, the number of order-$i$ seeds of \,$\Lambda$ is at most \,$\lambda_{i+1}-\lambda_i$.
\end{lema}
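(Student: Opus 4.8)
The plan is to establish the bound by contraposition, exhibiting an explicit decomposition of $\lambda_t+\lambda_i$ into two smaller non-gaps whenever $\lambda_t$ exceeds the claimed threshold. First I would record the features of $\Lambda_i$ in the relevant range. By Definition \ref{seed}, an order-$i$ seed requires $0\leq i<k$, and for such $i$ the semigroup $\Lambda_i$ still has conductor $c$. Consequently every integer $\geq c$ is a non-gap of $\Lambda_i$, and $\lambda_{i+1}$ is the smallest non-zero element of $\Lambda_i$, i.e.\ its multiplicity.

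Now let $\lambda_t$ be an order-$i$ seed, so that $\lambda_t+\lambda_i$ is a generator of $\Lambda_i$, meaning it is not the sum of two non-zero non-gaps of $\Lambda_i$. I would argue the contrapositive of the first assertion: assuming $\lambda_t\geq c+\lambda_{i+1}-\lambda_i$, consider the element $\lambda_t+\lambda_i-\lambda_{i+1}$. The hypothesis gives $\lambda_t+\lambda_i-\lambda_{i+1}\geq c$, so this is a non-zero non-gap of $\Lambda_i$. Since $\lambda_{i+1}$ is also a non-zero non-gap of $\Lambda_i$, the identity
$$\lambda_t+\lambda_i=(\lambda_t+\lambda_i-\lambda_{i+1})+\lambda_{i+1}$$
writes $\lambda_t+\lambda_i$ as a sum of two non-zero non-gaps of $\Lambda_i$, contradicting that it is a generator. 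Hence $\lambda_t\leq c+\lambda_{i+1}-\lambda_i-1$, which is the first statement.

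For the counting assertion, I would note that the order-$i$ seeds are distinct elements $\lambda_t$ with $t\geq k$, hence distinct non-gaps lying in the integer interval $[\,c,\ c+\lambda_{i+1}-\lambda_i-1\,]$. Every integer there is $\geq c$ and so is automatically a non-gap, while the interval contains exactly $\lambda_{i+1}-\lambda_i$ integers; therefore there can be at most $\lambda_{i+1}-\lambda_i$ order-$i$ seeds.

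I do not expect a serious obstacle here: the entire argument rests on the single observation that subtracting the multiplicity $\lambda_{i+1}$ of $\Lambda_i$ keeps us at or above the conductor once $\lambda_t$ crosses the threshold. The only point deserving a moment of care is confirming that $\lambda_t+\lambda_i-\lambda_{i+1}$ is genuinely a non-gap of $\Lambda_i$, not merely of $\Lambda$; but this is immediate from $i<k$, which guarantees that $\Lambda$ and $\Lambda_i$ coincide on all integers $\geq c$.
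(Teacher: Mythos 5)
Your proof is correct and follows essentially the same route as the paper: the paper simply invokes the fact that a generator of $\Lambda_i$ must be smaller than its conductor ($c$, since $i<k$) plus its multiplicity ($\lambda_{i+1}$), and your explicit decomposition $\lambda_t+\lambda_i=(\lambda_t+\lambda_i-\lambda_{i+1})+\lambda_{i+1}$ is precisely the justification of that fact. Your added verification of the counting statement via the interval $[\,c,\,c+\lambda_{i+1}-\lambda_i-1\,]$ is the same observation the paper leaves implicit.
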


\begin{proof}
A generator of the semigroup $\Lambda_i$ must be smaller than the sum of the conductor, which is \,$c$\,
for \,$0\leq i<k$, and the multiplicity, which is $\lambda_{i+1}$.
\end{proof}

\vspace{1truemm}

\begin{defi}{\rm
The \,{\em table of seeds}\, of $\Lambda$ is a binary table whose rows are indexed by the possible seed orders.
For \,$0\leq i\leq k-1$, the $i$-th row has \,$\lambda_{i+1}-\lambda_i$\, entries, each of them corresponding to a
possible order-$i$ seed of $\Lambda$. They are defined as follows:
for \mbox{\,$0\leq j\leq \lambda_{i+1}-\lambda_i-1$,\,} the $j$-th entry in the $i$-th row
is \,$1$\, if \,$\lambda_{k+j}=c+j$\, is an order-$i$ seed of $\Lambda$, and~\,$0$\, otherwise.
The total number of entries in the table is \,$c$.}
\end{defi}

\vspace{1truemm}

\begin{ex}
\label{ex1}{\rm
For the semigroup
$$\Lambda \,=\, \N_0\setminus\left\{1,2,3,4,6,7\right\} \,=\, \left\{0,5,8,9,10,\dots\right\}$$
one has \,$\lambda_1=5$, \,$c=\lambda_2=8$,\, so $\Lambda$ may have only order-zero
and order-one seeds. The former are simply the generators of $\Lambda$ among the elements
$$\lambda_2=8, \quad \lambda_3=9, \quad \lambda_4=10, \quad \lambda_5=11, \quad \lambda_6=12,$$
whereas the latter may only be among the first three, according to Lemma~\ref{cardinalseeds}.
Although \,$\lambda_4=2\lambda_1$\, is clearly the only one of these five elements which
is not a generator of $\Lambda$, it is an order-one seed because \,$\lambda_4+\lambda_1\,=15$\,
is not the sum of two non-zero elements in the semigroup
$$\Lambda_1 \,=\, \Lambda\setminus\left\{\lambda_1\right\} \,=\, 
\N_0\setminus\left\{1,2,3,4,5,6,7\right\} \,=\, \left\{0,8,9,10,\dots\right\}\!.$$
Since $\Lambda_1$ is ordinary in this example, \,$\lambda_2+\lambda_1$\, and \,$\lambda_3+\lambda_1$\, must then also 
be generators of this semigroup, which means that both $\lambda_2$ and $\lambda_3$ are also order-one seeds.
Thus, the table of seeds of $\Lambda$ is as follows:
$$\begin{array}{|c|c|c|c|c|}\hline
1 & 1 & 0 & 1 & 1\\\hline
1 & 1 & 1 \\\cline{1-3}
\end{array}
$$
}
\end{ex}

\vspace{1truemm}

\begin{ex}
\label{ex2}{\rm
For the semigroup
$$\Lambda \,=\, \N_0\setminus\left\{1,2,3,4,5,6,7,9,12,13\right\} \,=\, \left\{0,8,10,11,14,15,16,\dots\right\}$$
one has \,$\lambda_1=8$, \,$\lambda_2=10$, \,$\lambda_3=11$, \,$c=\lambda_4=14$. 
According to Lemma~\ref{cardinalseeds}, $\Lambda$ may have at most
\,$8$, \,$2$, \,$1$\, and \,$3$\, order-$i$ seeds for \,$i=0, 1, 2, 3$, respectively, 
and the possible seeds for each order are given consecutively from $\lambda_4$ on.
The actual seeds may be sieved from scratch using Definition \ref{seed}, as in the previous example.
Specifically, the table of seeds of~$\Lambda$ can be checked to be as follows:
$$\begin{array}{|c|c|c|c|c|c|c|c|}\hline
1 & 1 & 0 & 1 & 0  & 0 & 0 &0  \\ \hline
0 & 1  \\\cline{1-2}
1      \\\cline{1-3}
1 & 1 & 1 \\\cline{1-3}
\end{array}
$$
For instance, $\lambda_4$ is not an order-one seed because \,$\lambda_4+\lambda_1\,=22=2\lambda_3$\,
is not a generator of the semigroup
$$\Lambda_1 \,=\, \Lambda\setminus\left\{\lambda_1\right\} \,=\, 
\left\{0,10,11,14,15,16,\dots\right\}\!,$$
whereas it is an order-two seed because \,$\lambda_4+\lambda_2\,=24$\,
is not the sum of two non-zero elements in the semigroup}
$$\Lambda_2 \,=\, \Lambda\setminus\left\{\lambda_1,\,\lambda_2\right\} \,=\, 
\left\{0,11,14,15,16,\dots\right\}\!.$$
\end{ex}

\vspace{1truemm}

\section{Behavior of seeds along the semigroup tree}\label{results}

Our aim in this section is to deduce the table of seeds of a descendant in the semigroup tree 
from the table of seeds of its parent.
We keep the notations from the previous section.

\vspace{1truemm}

Let us fix an order-zero seed $\lambda_s$ of $\Lambda$, so that \,$s\geq k$\, and
$$\tilde\Lambda:=\Lambda\setminus\left\{\lambda_s\right\}$$
is a semigroup of genus \,$g+1$. The elements in $\tilde\Lambda$ are
\begin{center}
$\tilde\lambda_i=\lambda_i$ \quad for \quad $0\leq i<s$,\,
\qquad
$\tilde\lambda_i=\lambda_{i+1}=\lambda_i+1$ \quad for \quad $i\geq s$,
\end{center}
and the conductor is 
$$c(\tilde\Lambda) \,=\, \tilde\lambda_s \,=\, \lambda_s+1 \,=\, s+g+1.$$
So there are \,$s$\, possible seed orders for this semigroup: an order-$i$ seed of~$\tilde\Lambda$, 
with \,$0\leq i<s$, is an element $\lambda_t$ with \,$t>s$\,
such that \,$\lambda_t+\lambda_i$\, is a generator of the semigroup
$$\tilde\Lambda_i:=\tilde\Lambda\setminus\{\tilde\lambda_1,\dots,\tilde\lambda_i\}
=\Lambda\setminus\{\lambda_1, \dots, \lambda_i, \lambda_s\}.$$

\vspace{1truemm}

Clearly, any order-$i$ seed $\lambda_t$ of $\Lambda$ with \,$t>s$ is also 
an order-$i$ seed of $\tilde\Lambda$. This corresponds to the first type of \emph{old-order
seeds} of $\tilde\Lambda$, meaning by this term
the order-$i$ seeds of $\tilde\Lambda$ with \,$0\leq i<k$.
The four types of such seeds are gathered in the next result.

\vspace{1truemm}

\begin{lema}
Let \,$i$ be an index with \,$0\leq i<k$. An element \,$\lambda_t$ with \,$t>s$\, is
an order-$i$ seed of \,$\tilde\Lambda$ if and only if one of the following 
pairwise excluding conditions holds:
\begin{itemize}
\item[{\rm (1)}] $\lambda_t$ is an order-$i$ seed of $\Lambda$.
\item[{\rm (2)}] $i<k-1$, \,$\lambda_t=\lambda_s+\lambda_{i+1}-\lambda_i$\, and
\,$\lambda_s$ is an order-$(i+1)$ seed of $\Lambda$.
\item[{\rm (3)}] $i=k-1=s-2$\, and \,$\lambda_t=\lambda_s+\lambda_k-\lambda_{k-1}$.
\item[{\rm (4)}] $i=k-1=s-1$\, and either \,$\lambda_t=\lambda_s+\lambda_k-\lambda_{k-1}$\,
or \,$\lambda_t=\lambda_s+\lambda_k-\lambda_{k-1}+1$.
\end{itemize}
\end{lema}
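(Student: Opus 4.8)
The plan is to reformulate the seed condition as a statement about generators and then track how the generators change when the single generator $\lambda_s$ is removed. By definition, an element $\lambda_t$ with $t>s$ is an order-$i$ seed of $\tilde\Lambda$ precisely when $M:=\lambda_t+\lambda_i=\tilde\lambda_t+\tilde\lambda_i$ is a generator of $\tilde\Lambda_i=\Lambda_i\setminus\{\lambda_s\}$. First I would record the facts that make this workable: since $\lambda_s$ is a generator of $\Lambda$ it is also a generator of the smaller set $\Lambda_i\subseteq\Lambda$, and $\lambda_s\geq c=c(\Lambda_i)$ for $0\leq i<k$, so $\tilde\Lambda_i$ is genuinely a semigroup obtained by deleting one generator lying at or above the conductor; moreover $M$ always lies in $\tilde\Lambda_i$ and satisfies $M>\lambda_s$, because $\lambda_t>\lambda_s\geq c>\lambda_i$. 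Thus the problem reduces to: which elements are generators of $\tilde\Lambda_i$?

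Next I would split the generators of $\tilde\Lambda_i$ into \emph{inherited} ones and \emph{new} ones. An element of $\tilde\Lambda_i$ fails to be a sum of two nonzero elements of $\Lambda_i\setminus\{\lambda_s\}$ either because it was already not such a sum in $\Lambda_i$ (inherited, i.e. a generator of $\Lambda_i$ other than $\lambda_s$), or because every representation of it in $\Lambda_i$ uses the deleted element $\lambda_s$ (new). Since $M>\lambda_s$, the element $M$ is an inherited generator of $\tilde\Lambda_i$ exactly when it is a generator of $\Lambda_i$, that is, exactly when $\lambda_t$ is an order-$i$ seed of $\Lambda$; this is condition~(1), and it is automatically incompatible with the remaining cases, in each of which $M$ turns out to be a sum of two nonzero elements of $\Lambda_i$. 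It remains to pin down the new generators.

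For this I would invoke the bound behind Lemma~\ref{cardinalseeds}: any generator of a semigroup is strictly below the sum of its conductor and its multiplicity. A new generator $z$ of $\tilde\Lambda_i$ has the form $z=\lambda_s+y$ with $y$ at least the multiplicity $\lambda_{i+1}$ of $\Lambda_i$, and at the same time $z$ lies below the sum of the conductor $\lambda_s+1$ and the multiplicity of $\tilde\Lambda_i$. When $\lambda_s\neq\lambda_{i+1}$, deleting $\lambda_s$ leaves the multiplicity equal to $\lambda_{i+1}$, so $z<\lambda_s+1+\lambda_{i+1}$; together with $z\geq\lambda_s+\lambda_{i+1}$ this forces the single value $z=\lambda_s+\lambda_{i+1}$, that is, $\lambda_t=\lambda_s+\lambda_{i+1}-\lambda_i$. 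This $z$ is a new generator iff every representation of it as a sum of two nonzero elements of $\Lambda_i$ uses $\lambda_s$; since $2\lambda_{i+1}<z<2\lambda_s$, a representation uses $\lambda_s$ exactly when it uses $\lambda_{i+1}$ (both single out the representation $(\lambda_{i+1},\lambda_s)$), so this is the same as $z=\lambda_s+\lambda_{i+1}$ being a generator of $\Lambda_{i+1}=\Lambda_i\setminus\{\lambda_{i+1}\}$, i.e. $\lambda_s$ being an order-$(i+1)$ seed of $\Lambda$. For $i<k-1$ the order $i+1$ is a legitimate seed order and this is condition~(2). For $i=k-1$ the semigroup $\Lambda_{k-1}=\{0\}\cup\{c,c+1,\dots\}$ is ordinary, order $i+1=k$ is no longer available, and a direct count of the representations of $\lambda_s+c$ in this ordinary semigroup shows the new generator occurs exactly when $\lambda_s=c+1$, i.e. $s=k+1$ (condition~(3)), and never when $\lambda_s\geq c+2$.

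The delicate case, which I expect to be the main obstacle, is $\lambda_s=\lambda_{i+1}$: the constraints $i<k$ and $s\geq k$ then force $\lambda_{i+1}=\lambda_s\geq c$, hence $i=k-1=s-1$ and $\lambda_s=c$. Deleting $\lambda_s=c$ from the ordinary semigroup $\Lambda_{k-1}=\{0\}\cup\{c,c+1,\dots\}$ raises its multiplicity from $c$ to $c+1$, so the generator bound becomes $z<(\lambda_s+1)+(c+1)=2c+2$ and now permits \emph{two} values. A short direct computation identifies the new generators of $\{0\}\cup\{c+1,c+2,\dots\}$ relative to $\{0\}\cup\{c,c+1,\dots\}$ as exactly $2c$ and $2c+1$, which translate through $\lambda_t=z-\lambda_{k-1}$ into the two options $\lambda_s+\lambda_k-\lambda_{k-1}$ and $\lambda_s+\lambda_k-\lambda_{k-1}+1$ of condition~(4). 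Collecting the branches yields the stated equivalence, and the constraints on $i$ and $s$ make the four conditions pairwise exclusive. The single point demanding care throughout is to monitor whether deleting $\lambda_s$ changes the multiplicity of $\Lambda_i$, since this is precisely what separates the single-valued cases~(2) and~(3) from the double-valued case~(4).
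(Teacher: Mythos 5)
Your proof is correct and takes essentially the same route as the paper's: both rest on the conductor-plus-multiplicity bound of Lemma~\ref{cardinalseeds} to reduce the non-inherited candidates to $\lambda_s+\lambda_{i+1}-\lambda_i$ (with one extra value exactly when $i=k-1=s-1$, where the multiplicity of $\tilde\Lambda_i$ jumps), and both hinge on the observation that $\lambda_s+\lambda_{i+1}$ is a generator of $\tilde\Lambda_i$ if and only if it is a generator of $\Lambda_{i+1}$, followed by the same case split $i<k-1$ versus $i=k-1$ with $s=k$, $s=k+1$ or $s\geq k+2$. Your ``inherited versus new generators'' framing and explicit tracking of the multiplicity change are an organizational repackaging of the paper's two-directional argument, not a different method.
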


\begin{proof}
Let us first assume that $\lambda_t$ is an order-$i$ seed of $\tilde\Lambda$.
Then, Lemma \ref{cardinalseeds} and the definition of $\tilde\Lambda$ yield the inequality
$$\lambda_t \,\leq\, 
\lambda_s \,+\, \tilde\lambda_{i+1} \,-\, \lambda_i.$$
Furthermore, (1) is not satisfied
if and only if
$$\lambda_t \,=\, \lambda_s \,+\, \lambda_j \,-\, \lambda_i$$
for some index \,$j\geq i+1$. 
If this holds, then
\,$\lambda_{i+1} \,\leq\, \lambda_j \,\leq\, \tilde\lambda_{i+1}$,\,
where \,$\tilde\lambda_{i+1}=\lambda_{i+1}$\, if \,$i<s-1$, 
while \,$\tilde\lambda_{i+1}=\lambda_{i+1}+1$\, otherwise. In the first case,
one has \,$j=i+1$,\, which leads to the expression for $\lambda_t$ displayed in (2) and (3).
Since \mbox{\,$0\leq i<k\leq s$,} the second case can only occur for \,$i=s-1=k-1$, 
and then the only two possibilities for $\lambda_t$ are those in (4).

\vspace{1truemm}

Let us now examine when the element \,$\lambda_s +\lambda_{i+1} - \lambda_i$\, is an order-$i$ 
seed of~$\tilde\Lambda$.  
It suffices to remark that the sum \,$\lambda_s + \lambda_{i+1}$\, 
is a generator of $\tilde\Lambda_i$ if and only if it
is a generator of $\Lambda_{i+1}$.
Then, two cases must be distinguished, depending on whether \,$i<k-1$\, or \,$i=k-1$.
In the first case, the latter condition on $\lambda_s$ amounts to 
the last one in (2) by definition of seed.
As for the second case, \,$\lambda_s + \lambda_k$\, is a generator of $\Lambda_k$
if and only if \,$s$\, and \,$k$\, are related as in~(3) or~(4), since otherwise it
can be written as the sum of
two elements in that semigroup, namely $\lambda_{s-1}$ and~$\lambda_{k+1}$.

\vspace{1truemm}

To conclude the proof, one only needs to check the second possibility in (4).
Indeed, \,$2\lambda_s+1$\, is a generator of the ordinary semigroup $\tilde\Lambda_{s-1}=\Lambda_s$.
\end{proof}

\vspace{1truemm}

Assume now that the removed seed $\lambda_s$ is different from the conductor of $\Lambda$, that is, $s>k$.
If \,$s=k+1$, then there are exactly two \emph{new-order seeds} of~$\tilde\Lambda$, meaning by this 
term the 
order-$i$ seeds of $\tilde\Lambda$ with \,$k\leq i<s$. Otherwise, there are always three such seeds.
This follows right away from the next result.

\vspace{1truemm}

\begin{lema}
For an index \,$i$ at least \,$k$, the following hold:
\begin{itemize}
\item[$\cdot$] If \,$i<s-2$, then $\tilde\Lambda$ has no order-$i$ seeds.
\item[$\cdot$] If \,$i=s-2$, then the only order-$i$ seed of \,$\tilde\Lambda$ is \,$\lambda_s+1$.
\item[$\cdot$] If \,$i=s-1$, then the only order-$i$ seeds of \,$\tilde\Lambda$ are \,$\lambda_s+1$\, and \,$\lambda_s+2$.
\end{itemize}
\end{lema}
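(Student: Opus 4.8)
The plan is to combine the cardinality bound of Lemma~\ref{cardinalseeds}, now applied to the semigroup $\tilde\Lambda$ (whose conductor is $\tilde{c}=\lambda_s+1$), with a direct inspection of the generator condition from Definition~\ref{seed}. The crucial simplifying observation is that, for any index $i\geq k$, every nonzero element of
$$\tilde\Lambda_i=\Lambda\setminus\{\lambda_1,\dots,\lambda_i,\lambda_s\}$$
has index at least $i+1>k$ and different from $s$ in the original indexing of $\Lambda$, so it lies in the linear range where $\lambda_j=j+g$. This turns every generator question about $\tilde\Lambda_i$ into an elementary statement about sums of indices, which is what drives the whole argument.

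First I would pin down the range of candidate seeds. Since $\lambda_j=j+g$ for $j\geq k$, one has $\tilde\lambda_{i+1}-\tilde\lambda_i=1$ for $k\leq i<s-1$, while $\tilde\lambda_s-\tilde\lambda_{s-1}=(\lambda_s+1)-\lambda_{s-1}=2$ for $i=s-1$. Feeding these differences into Lemma~\ref{cardinalseeds} shows that for $k\leq i\leq s-2$ the unique possible order-$i$ seed is $\tilde{c}=\lambda_s+1$, whereas for $i=s-1$ the only possibilities are $\lambda_s+1$ and $\lambda_s+2$. It then remains to test, in each regime, whether these candidates satisfy the seed condition.

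For the candidate $\tilde{c}=\lambda_s+1$ with $k\leq i\leq s-2$, the relevant sum is $N:=(\lambda_s+1)+\lambda_i=\lambda_s+\lambda_i+1$, and I would ask whether $N=\lambda_p+\lambda_q$ for some indices $p,q\geq i+1$ with $p,q\neq s$. Working in the linear range, this is equivalent to $p+q=s+i+1$ under those constraints. The key computation is that $p=i+1$ forces $q=s$, which is excluded, so a valid decomposition requires $p,q\geq i+2$; this is possible exactly when $s+i+1\geq 2(i+2)$, i.e. when $i<s-2$, in which case $\lambda_{i+2}+\lambda_{s-1}$ realizes $N$ and $\tilde{c}$ is not a seed. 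This yields the first bullet. For $i=s-2$ the only pair of indices $\geq s-1$ summing to $2s-1$ is $(s-1,s)$, whose second entry is forbidden, so $N$ is a generator and $\tilde{c}$ is the unique order-$(s-2)$ seed, giving the second bullet.

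For $i=s-1$ the semigroup $\tilde\Lambda_{s-1}=\Lambda_s$ is ordinary with multiplicity $\lambda_s+1$, so its generators are precisely the elements from $\lambda_s+1$ up to $2\lambda_s+1$. Since $\lambda_{s-1}=\lambda_s-1$, the two candidates $\lambda_s+1$ and $\lambda_s+2$ yield the sums $2\lambda_s$ and $2\lambda_s+1$, both of which lie in that generating range; hence both candidates are order-$(s-1)$ seeds, which is the third bullet. I expect the main obstacle to be the bookkeeping of the previous paragraph: one must verify that the single excluded index $s$ is exactly what makes $N$ indecomposable when $i=s-2$ yet permits a decomposition for smaller $i$, and that no decomposition using a summand of index below $k$ has been overlooked. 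The linear-range observation recorded at the outset is precisely what rules out such stray decompositions and keeps the index count exhaustive.
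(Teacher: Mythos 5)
Your proof is correct, and it departs from the paper's in one genuine respect: how the list of candidate seeds is cut down. The paper does not invoke Lemma~\ref{cardinalseeds} at this point; instead it takes an arbitrary order-$i$ seed $\lambda_t$ of $\tilde\Lambda$ and plays the two decompositions $\lambda_{t-1}+\lambda_{i+1}$ and $\lambda_{t-2}+\lambda_{i+2}$ against each other to force both $s-i\leq 2$ and $t-s\leq 2$, together with $t=s+1$ whenever $i=s-2$, and only then checks the surviving cases. You instead apply Lemma~\ref{cardinalseeds} to $\tilde\Lambda$ itself, after computing $\tilde\lambda_{i+1}-\tilde\lambda_i=1$ for $k\leq i<s-1$ and $\tilde\lambda_s-\tilde\lambda_{s-1}=2$, which immediately pins the candidates to the single element $\lambda_s+1$ when $k\leq i\leq s-2$ and to $\lambda_s+1,\lambda_s+2$ when $i=s-1$; the remaining constraint on the order is then settled by the single decomposition $\lambda_{i+2}+\lambda_{s-1}$, which is in fact the paper's second decomposition specialized to $t=s+1$. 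The verification half of the two arguments is essentially identical: for $i=s-2$ your index count is equivalent to the paper's observation that $2\lambda_s-1$ is a generator of $\N_0\setminus\{1,\dots,\lambda_s-2,\lambda_s\}$, and for $i=s-1$ both proofs use that $\tilde\Lambda_{s-1}=\Lambda_s$ is ordinary, so that $2\lambda_s$ and $2\lambda_s+1$ are generators. What each route buys: the paper's is self-contained and treats the bounds on $t$ and on $s-i$ symmetrically by one mechanism, while yours reuses an already-proved lemma to shorten the case analysis and makes explicit that the whole question reduces to arithmetic of indices in the linear range, at no loss of rigor.
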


\begin{proof}
Let $\lambda_t$ be an order-$i$ seed of $\tilde\Lambda$, so that \,$i<s<t$. Since \,$i\geq k$, one has
$$\lambda_i \,=\, \lambda_{i+1}\,-\,1 \,=\, \lambda_{i+2}\,-\,2  \qquad\quad
\mathrm{and} \qquad\quad
\lambda_t \,=\, \lambda_{t-1}\,+\,1 \,=\, \lambda_{t-2}\,+\,2.$$
Then both \,$s-i$\, and \,$t-s$\, are at most $2$. Otherwise, \,$\lambda_t+\lambda_i$\, could be written 
as the sum of two elements in the semigroup~$\tilde\Lambda_i$ above:
either \,$\lambda_{t-1}+\lambda_{i+1}$\, or~\,$\lambda_{t-2}+\lambda_{i+2}$.
Furthermore, \,$t=s+1$\, must hold whenever \,$s=i+2$.

\vspace{1truemm}

For \,$i=s-2$\, and \,$i=s-1$, one has, respectively,
$$\tilde\Lambda_i=\N_0\setminus\{1,\dots,\lambda_s-2,\lambda_s\}
\qquad  \ \ \mathrm{and}
\qquad  \ \ 
\tilde\Lambda_i=\N_0\setminus\{1,\dots,\lambda_s\}.$$
The proof concludes by checking that \,$\lambda_{s+1}+\lambda_{i}=2\lambda_s-1$\, is a generator
of $\tilde\Lambda_i$ in the first case and that \,$\lambda_{s+1}+\lambda_{i}=2\lambda_s$\, and 
\,$\lambda_{s+2}+\lambda_{i}=2\lambda_s+1$\, are both generators of $\tilde\Lambda_i$ in the second case.
\end{proof}

\vspace{2truemm}

Now the table of seeds of $\tilde\Lambda$ can be easily built from the table of seeds of $\Lambda$.
According to the results in this section, this may be done as follows:
\begin{itemize}

\item[$\cdot$] In each of the $k$ rows of the table, 
$s-k+1$ new entries must be added to the right, 
and the same number of entries must be cropped starting from the left. 
This corresponds to recycling old-order seeds of type (1).

\item[$\cdot$] Then, the last right-hand new entry in the last row must be set to~$1$
exactly when \,$s\!=\!k$\, or \,$s\!=\!k+1$. Moreover, in the first case an additional right-hand entry 
with value $1$ is required.
The last right-hand new entry in any other row must be given the value, if any, of the last right-hand cropped 
entry in the row just below. All entries which remain empty are then set to~$0$.
This codifies old-order seeds of types (2), (3) and (4),
and completes all modifications to be done in the first $k$ rows.

\item[$\cdot$] Finally, to take account of new-order seeds, \,$s-k$\, rows must be added to the table if \,$s>k$.
The last of them consists of two entries with value~$1$. If~\,$s>k+1$, the last but one row consists of 
a single entry with value $1$, while each of the remaining new rows consists of a single entry with value~$0$. 

\end{itemize}

\vspace{1truemm}

\begin{ex}
\label{ex1bis}{\rm
We can compute the tables of seeds corresponding to the four descendants of the semigroup $\Lambda$ 
in {\rm Example~\ref{ex1}}, that is, the table of seeds of \,$\tilde\Lambda$\, for~\,$s=2, 3, 5, 6$,  
by graphically implementing in each case the three steps above:}

\noindent
{\footnotesize
{\renewcommand{\arraystretch}{.85}
\begin{longtable}{cccc}
$s=2$ & $s=3$ & $s=5$ & $s=6$
\\\\
$\begin{array}{|c|c|c|c|c|} \hline\\[-10pt]
\!\bf 1\! & \bra 1 & 0 & \bra 1 & \bra 1 \\ \hline\\[-10pt]
\bla 1 & \bra 1 & \bra 1 \\\cline{1-3}
\end{array}$
&
$\begin{array}{|c|c|c|c|c|} \hline\\[-10pt]
1 & \!\bf 1\! & 0 & \bra 1 & \bra 1 \\ \hline\\[-10pt]
1 & \bla 1 & \bra 1 \\\cline{1-3}
\end{array}$
&
$\begin{array}{|c|c|c|c|c|} \hline\\[-10pt]
1 & 1 & 0 & \!\bf 1\! & \bra 1 \\ \hline\\[-10pt]
1 & 1 & 1 \\\cline{1-3}
\end{array}$
&
$\begin{array}{|c|c|c|c|c|} \hline\\[-10pt]
1 & 1 & 0 & 1 & \!\bf 1\!\\ \hline\\[-10pt]
1 & 1 & 1 \\\cline{1-3}
\end{array}$

\\\\

$\big\downarrow$ & $\big\downarrow$ & $\big\downarrow$ & $\big\downarrow$

\\\\

$\begin{array}{|c|c|c|c|c|} \hline\\[-10pt]
\bra 1 & 0 & \bra 1 & \bra 1 & \phantom 0\\ \hline\\[-10pt]
\bra 1 & \bra 1 & \phantom 0\\ \cline{1-3}
\end{array}$
&
$\begin{array}{|c|c|c|c|c|} \hline\\[-10pt]
0 & \bra 1 & \bra 1 & \phantom 0 & \phantom 0\\ \hline\\[-10pt]
\bra 1 & \phantom 0 & \phantom 0\\ \cline{1-3}
\end{array}$
&
$\begin{array}{|c|c|c|c|c|} \hline\\[-10pt]
\bra 1 & \phantom 0 & \phantom 0 & \phantom 0 & \phantom 0 \\ \hline\\[-10pt]
\phantom 0 & \phantom 0 & \phantom 0 \\ \cline{1-3}
\end{array}$
&
$\begin{array}{|c|c|c|c|c|} \hline\\[-10pt]
\phantom 0 & \phantom 0 & \phantom 0 & \phantom 0 & \phantom 0 \\ \hline\\[-10pt]
\phantom 0 & \phantom 0 & \phantom 0 \\ \cline{1-3}
\end{array}$

\\\\

$\big\downarrow$ & $\big\downarrow$ & $\big\downarrow$ & $\big\downarrow$

\\\\

$\begin{array}{|c|c|c|c|c|} \hline\\[-10pt]
\bra 1 & 0 & \bra 1 & \bra 1 & \bla 1\\ \hline\\[-10pt]
\bra 1 & \bra 1 & \gra 1 & \gra 1 \\ \cline{1-4}
\end{array}$
&
$\begin{array}{|c|c|c|c|c|} \hline\\[-10pt]
0 & \bra 1 & \bra 1 & 0 & \bla 1\\ \hline\\[-10pt]
\bra 1 & 0 & \gra 1\\ \cline{1-3}
\end{array}$
&
$\begin{array}{|c|c|c|c|c|} \hline\\[-10pt]
\bra 1 & 0 & 0 & 0 & 0 \\ \hline\\[-10pt]
0 & 0 & 0 \\ \cline{1-3}
\end{array}$
&
$\begin{array}{|c|c|c|c|c|} \hline\\[-10pt]
0 & 0 & 0 & 0 & 0 \\ \hline\\[-10pt]
0 & 0 & 0  \\ \cline{1-3}
\end{array}$

\\\\

& $\big\downarrow$ & $\big\downarrow$  & $\big\downarrow$

\\\\

&
\begin{tabular}{l}
$\begin{array}{|c|c|c|c|c|} \hline\\[-10pt]
0 & \bra 1 & \bra 1 & 0 & \bla 1\\ \hline\\[-10pt]
\bra 1 & 0 & \gra 1\\ \cline{1-3} \\[-10pt]
\gra 1 & \gra 1 \\\cline{1-2}
\end{array}$ \\
 \\[21pt]
\end{tabular}
&
\begin{tabular}{l}
$\begin{array}{|c|c|c|c|c|} \hline\\[-10pt]
\bra 1 & 0 & 0 & 0 & 0 \\ \hline\\[-10pt]
0 & 0 & 0 \\ \cline{1-3} \\[-10pt]
0 \\\cline{1-1} \\[-10pt]
\gra 1 \\\cline{1-2} \\[-10pt]
\gra 1 & \gra 1 \\\cline{1-2}
\end{array}$\\
 \\
\end{tabular}
&
$\begin{array}{|c|c|c|c|c|} \hline\\[-10pt]
0 & 0 & 0 & 0 & 0 \\ \hline\\[-10pt]
0 & 0 & 0  \\ \cline{1-3} \\[-10pt]
0\\ \cline{1-1}\\[-10pt]
0\\ \cline{1-1}\\[-10pt]
\gra 1\\\cline{1-2}\\[-10pt]
\gra 1 & \gra 1 \\\cline{1-2}
\end{array}$
\end{longtable}}
}
\end{ex}

\vspace{1truemm}

\begin{ex}\label{ex2bis}{\rm
We can compute the table of seeds for each of the descendants of the semigroup $\Lambda$ 
in {\rm Example~\ref{ex2}}, that is, the table of seeds of \,$\tilde\Lambda$\, for~\,$s=4, 5, 7$.}

\noindent
{\footnotesize
{\renewcommand{\arraystretch}{.85}
\begin{longtable}{ccc}
$s=4$ & $s=5$ & $s=7$
\\\\
$\begin{array}{|c|c|c|c|c|c|c|c|} \hline\\[-10pt]
\!\bf 1\! & \bra 1 & 0 & \bra 1 & 0  & 0 & 0 & 0  \\ \hline\\[-10pt]
0 & \bra 1  \\\cline{1-2}\\[-10pt]
\bla 1  \\ \cline{1-3}\\[-10pt]
\bla 1 & \bra 1 & \bra 1\\
\cline{1-3}
\end{array}$
&
$\begin{array}{|c|c|c|c|c|c|c|c|}\hline\\[-10pt]
1 & \bf 1 & 0 & \bra 1 & 0  & 0 & 0 & 0  \\ \hline\\[-10pt]
0 & \bla 1  \\ 
\cline{1-2}\\[-10pt]
1 \\ 
\cline{1-3}\\[-10pt]
1  & \bla 1 & \bra 1 \\\cline{1-3}
\end{array}$
&
$\begin{array}{|c|c|c|c|c|c|c|c|}\hline\\[-10pt]
1 & 1 & 0& \bf 1 & 0  & 0 & 0 &0  \\ \hline\\[-10pt]
0 & 1  \\\cline{1-2}\\[-10pt]
1      \\\cline{1-3}\\[-10pt]
1 & 1 & 1 \\\cline{1-3}
\end{array}$

\\\\

$\big\downarrow$ & $\big\downarrow$ & $\big\downarrow$

\\\\

$\begin{array}{|c|c|c|c|c|c|c|c|}\hline\\[-10pt]
\bra 1 & 0 & \bra 1 & 0  & 0 & 0 & 0 & \phantom 0 \\ \hline\\[-10pt]
\bra 1 & \phantom 0 \\\cline{1-2}\\[-10pt]
\phantom 0     \\\cline{1-3}\\[-10pt]
\bra 1 & \bra 1 & \phantom 0 \\\cline{1-3}
\end{array}$
&
$\begin{array}{|c|c|c|c|c|c|c|c|}\hline\\[-10pt]
0 & \bra 1 & 0 & 0  & 0 &0 & \phantom 0 & \phantom 0  \\ \hline\\[-10pt]
\phantom 0 & \phantom 0 \\\cline{1-2}\\[-10pt]
\phantom 0   \\\cline{1-3}\\[-10pt]
\bra 1 & \phantom 0 & \phantom 0 \\\cline{1-3}
\end{array}$
&
$\begin{array}{|c|c|c|c|c|c|c|c|}\hline\\[-10pt]
0 & 0  & 0 & 0 &  \phantom 0 & \phantom 0 & \phantom 0 & \phantom 0 \\ \hline\\[-10pt]
\phantom 0 & \phantom 0 \\\cline{1-2}\\[-10pt]
\phantom 0  \\\cline{1-3}\\[-10pt]
\phantom 0 & \phantom 0 & \phantom 0 \\\cline{1-3}
\end{array}$

\\\\

$\big\downarrow$ & $\big\downarrow$ & $\big\downarrow$

\\\\

$\begin{array}{|c|c|c|c|c|c|c|c|}\hline\\[-10pt]
\bra 1 & 0 & \bra 1 & 0  & 0 & 0 & 0 & 0 \\ \hline\\[-10pt]
\bra 1 & \bla 1 \\\cline{1-2}\\[-10pt]
\bla 1     \\\cline{1-4}\\[-10pt]
\bra 1 & \bra 1 & \gra 1 & \gra 1 \\\cline{1-4}
\end{array}$
&
$\begin{array}{|c|c|c|c|c|c|c|c|}\hline\\[-10pt]
0 & \bra 1 & 0 & 0  & 0 &0 & 0 &  \bla 1  \\ \hline\\[-10pt]
0 & 0 \\\cline{1-2}\\[-10pt]
\bla 1   \\\cline{1-3}\\[-10pt]
\bra 1 & 0 & \gra 1 \\\cline{1-3}
\end{array}$
&
$\begin{array}{|c|c|c|c|c|c|c|c|}\hline\\[-10pt]
0 & 0  & 0 & 0 & 0 & 0 & 0 & 0 \\ \hline\\[-10pt]
0 & 0 \\\cline{1-2}\\[-10pt]
0  \\\cline{1-3}\\[-10pt]
0 & 0 & 0 \\\cline{1-3}
\end{array}$

\\\\

& $\big\downarrow$ & $\big\downarrow$

\\\\

&
\begin{tabular}{l}
$\begin{array}{|c|c|c|c|c|c|c|c|}\hline\\[-10pt]
0 & \bra 1 & 0 & 0  & 0 & 0 & 0 & \bla 1  \\ \hline\\[-10pt]
0 & 0 \\\cline{1-2}\\[-10pt]
\bla 1   \\\cline{1-3}\\[-10pt]
\bra 1 & 0 & \gra 1 \\\cline{1-3}\\[-10pt]
\gra 1 & \gra 1 \\\cline{1-2}
\end{array}$\\
 \\[10pt]
\end{tabular}
&
$\begin{array}{|c|c|c|c|c|c|c|c|}\hline\\[-10pt]
0 & 0  & 0 & 0 & 0 & 0 & 0 & 0 \\ \hline\\[-10pt]
0 & 0 \\\cline{1-2}\\[-10pt]
0  \\\cline{1-3}\\[-10pt]
0 & 0 & 0 \\\cline{1-3}\\[-10pt]
0\\\cline{1-1}\\[-10pt]
\gra 1\\\cline{1-2}\\[-10pt]
\gra 1 & \gra 1 \\\cline{1-2}
\end{array}$
\end{longtable}}
}
\end{ex}

\section{The seeds descending algorithm}\label{rake}
Let us associate with a numerical semigroup $\Lambda$ two binary strings 
$$G(\Lambda)\,=\,G_0\,G_1\,\cdots\,G_{\ell}\,\cdots \qquad\quad \mathrm{and} 
\qquad\quad  S(\Lambda)\,=\,S_0\,S_1\,\cdots\,S_{\ell}\,\cdots$$
encoding, respectively, the gaps and the seeds of $\Lambda$.
Specifically, with the notations and definitions in Section \ref{seeds},
$$G_{\ell}:=\left\{\begin{array}{l}
1 \quad \mathrm{if} \ \,\ell+1\, \ \mathrm{is \ a \ gap \ of \ } \Lambda,\\[5pt]
0 \quad \mathrm{otherwise},
\end{array}
\right.$$
whereas $S(\Lambda)$ amounts to the concatenation of the rows in the table of seeds of~$\Lambda$,
that is,
$$S_{\lambda_i+j}:=\left\{\begin{array}{l}
1 \quad \mathrm{if} \ \,c+j\, \ \mathrm{is \ an \ order\mbox{-}}i \mathrm{ \ seed \ of \ } \Lambda,\\[5pt]
0 \quad \mathrm{otherwise},
\end{array}
\right.$$
for \,$i\geq 0$\, and \,$0\leq j<\lambda_{i+1}-\lambda_i$. In particular, \,$G_\ell=S_\ell=0$\, for \,$\ell\geq c$,\,
so below we identify each string with its first \,$c$\, bits:
$$G(\Lambda)\,=\,G_0\,G_1\,\cdots\,G_{c-1} \qquad\qquad \mathrm{and} 
\qquad\qquad  
S(\Lambda)\,=\,S_0\,S_1\,\cdots\,S_{c-1}.$$
The bit $G_{c-1}$ is always $0$, while \,$S_{c-3}=S_{c-2}=S_{c-1}=1$\, whenever~\,$c\geq 3$.
The pairs of binary strings \,$G(\Lambda)$, $S(\Lambda)$,\, for $\Lambda$ running over the first nodes 
in the semigroup tree, are displayed in Figure \ref{treeGS}.

\begin{figure}[H]
\centering
\resizebox{\textwidth}{11cm}{
{\bfseries\small\setlength{\tabcolsep}{1pt}
\begin{tikzpicture}[grow'=right, every node/.style = {align=left}]\tikzset{level 1/.style={level distance=1.5cm}}\tikzset{level 2/.style={level distance=2.5cm}}\tikzset{level 3/.style={level distance=3cm}}\tikzset{level 4/.style={level distance=3.5cm}}\tikzset{level 5+/.style={level distance=4cm}}\Tree
[.{\begin{tabular}{c}0\\1\\\end{tabular}} [.{\begin{tabular}{cc}1&0\\1&1\\\end{tabular}} [.{\begin{tabular}{ccc}1&1&0\\1&1&1\\\end{tabular}} [.{\begin{tabular}{cccc}1&1&1&0\\1&1&1&1\\\end{tabular}} [.{\begin{tabular}{ccccc}1&1&1&1&0\\1&1&1&1&1\\\end{tabular}} [.{\begin{tabular}{cccccc}1&1&1&1&1&0\\1&1&1&1&1&1\\\end{tabular}}  ] [.{\begin{tabular}{ccccccc}1&1&1&1&0&1&0\\1&1&1&0&1&1&1\\\end{tabular}}  ] [.{\begin{tabular}{cccccccc}1&1&1&1&0&0&1&0\\1&1&0&0&0&1&1&1\\\end{tabular}}  ] [.{\begin{tabular}{ccccccccc}1&1&1&1&0&0&0&1&0\\1&0&0&0&0&0&1&1&1\\\end{tabular}}  ] [.{\begin{tabular}{cccccccccc}1&1&1&1&0&0&0&0&1&0\\0&0&0&0&0&0&0&1&1&1\\\end{tabular}}  ]  ] [.{\begin{tabular}{cccccc}1&1&1&0&1&0\\1&1&0&1&1&1\\\end{tabular}} [.{\begin{tabular}{ccccccc}1&1&1&0&1&1&0\\1&0&1&1&1&1&1\\\end{tabular}}  ] [.{\begin{tabular}{cccccccc}1&1&1&0&1&0&1&0\\0&1&0&1&0&1&1&1\\\end{tabular}}  ] [.{\begin{tabular}{cccccccccc}1&1&1&0&1&0&0&0&1&0\\0&0&0&0&0&0&0&1&1&1\\\end{tabular}}  ]  ] [.{\begin{tabular}{ccccccc}1&1&1&0&0&1&0\\1&0&0&0&1&1&1\\\end{tabular}} [.{\begin{tabular}{cccccccc}1&1&1&0&0&1&1&0\\0&0&0&1&1&1&1&1\\\end{tabular}}  ]  ] [.{\begin{tabular}{cccccccc}1&1&1&0&0&0&1&0\\0&0&0&0&0&1&1&1\\\end{tabular}}  ]  ] [.{\begin{tabular}{ccccc}1&1&0&1&0\\1&0&1&1&1\\\end{tabular}} [.{\begin{tabular}{cccccc}1&1&0&1&1&0\\0&1&1&1&1&1\\\end{tabular}} [.{\begin{tabular}{cccccccc}1&1&0&1&1&0&1&0\\1&0&1&1&0&1&1&1\\\end{tabular}}  ] [.{\begin{tabular}{ccccccccc}1&1&0&1&1&0&0&1&0\\0&0&1&0&0&0&1&1&1\\\end{tabular}}  ]  ] [.{\begin{tabular}{cccccccc}1&1&0&1&0&0&1&0\\0&0&0&0&0&1&1&1\\\end{tabular}}  ]  ] [.{\begin{tabular}{cccccc}1&1&0&0&1&0\\0&0&0&1&1&1\\\end{tabular}}  ]  ] [.{\begin{tabular}{cccc}1&0&1&0\\0&1&1&1\\\end{tabular}} [.{\begin{tabular}{cccccc}1&0&1&0&1&0\\0&1&0&1&1&1\\\end{tabular}} [.{\begin{tabular}{cccccccc}1&0&1&0&1&0&1&0\\0&1&0&1&0&1&1&1\\\end{tabular}} [.{\begin{tabular}{cccccccccc}1&0&1&0&1&0&1&0&1&0\\0&1&0&1&0&1&0&1&1&1\\\end{tabular}}  ]  ]  ]  ]  ]  ] ]
\end{tikzpicture}}
}
\caption{
}\label{treeGS}
\end{figure}

When transcribing and implementing the pseudocodes in this section, binary strings such as $G(\Lambda)$ and $S(\Lambda)$
are regarded, stored and manipulated as integers which are encoded in binary form. 
We use below the bitwise operations on binary strings \,$\&$\, (\emph{and}) \,and
\,$\mid$\, (inclusive \emph{or}),
as well as the \,\emph{left shift}\, \,$\ll$\,
and the \,\emph{right shift}\, \,$\gg$\, of a binary string by a non-negative integer $x$.
In terms of binary encoding of integers, the latter shift is equivalent to multiplying by $2^x$.

\vspace{1truemm}

Assume from now on that $\Lambda$ is not the trivial semigroup
and let $\tilde\Lambda=\Lambda\!\setminus\!\left\{\lambda_s\right\}$ be a descendant of $\Lambda$ 
in the semigroup tree, as in Section~\ref{results}.
Then set \,$\tilde s = s - k$,\, so that \,$0\leq \tilde s<\lambda_1$\, and \,$\lambda_s=c+\tilde s$.
The string $G(\tilde\Lambda)$ is obtained from $G(\Lambda)$ by simply 
switching the bit $G_{c+\tilde s-1}$ from $0$ to $1$.
As for $S(\tilde\Lambda)$, it can be computed from~$S(\Lambda)$ by adapting the construction
of the table of seeds of $\tilde\Lambda$ as is described in Section \ref{results},
which leads to the following rephrasing in terms of strings.

\vspace{2truemm}

\begin{lema}
Let \ $\tilde S\,=\,\tilde S_0\,\tilde S_1\,\cdots\,\tilde S_{\ell}\,\cdots$ \ be the binary string defined by
$$\tilde S_{\ell}:=\left\{\begin{array}{l}
0 \quad \mathrm{if} \quad \ell=\lambda_i+j \quad \mathrm{with} \quad 1\leq i<k, \
\,0\leq j<\operatorname{min}(\tilde s, \,\lambda_{i+1}-\lambda_i),\\[5pt]
S_{\ell} \quad \mathrm{otherwise}.
\end{array}
\right.$$
Then,\\[-20pt]
$$S(\tilde\Lambda) \,=\, (\tilde S\ll \tilde s+1) \mid \left(1\,1\,1\gg c+\tilde s-2\right)
\,=\, 
\tilde S_{\tilde s+1}\,\tilde S_{\tilde s+2}\,\cdots\,\tilde S_{c-1}
\,\overbrace{0\,\cdots\,0\,\,1}^{2\tilde s}\,1\,1.$$\\[-10pt] 
In particular, \,$S(\tilde\Lambda) \,=\, \,S_1\,\cdots\,S_{c-1}\,1\,1$\, whenever \,$\tilde s=0$.
\end{lema}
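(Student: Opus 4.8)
The plan is to read the claimed formula bit by bit and match it, position by position, against the true table of seeds of $\tilde\Lambda$, whose entries are dictated by the two classification lemmas of this section. Throughout I write $L_i=\lambda_{i+1}-\lambda_i$ for the length of the $i$-th row. The position dictionary is the starting point: since $\tilde\lambda_{i'}=\lambda_{i'}$ for $i'<s$ and $c(\tilde\Lambda)=\lambda_s+1$, the bit of $S(\tilde\Lambda)$ at position $\lambda_{i'}+j'$ records whether $w:=\lambda_s+1+j'$ is an order-$i'$ seed of $\tilde\Lambda$, for $0\le i'<s$. The formula assigns to this position the value $\tilde S_{\lambda_{i'}+j'+\tilde s+1}$, possibly forced to $1$ by the trailing block, so the whole proof reduces to comparing these two quantities. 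First I would record the translation identity: because $\lambda_s=c+\tilde s$, the source bit $\tilde S_{\lambda_{i'}+(j'+\tilde s+1)}$ refers, \emph{when} $j'+\tilde s+1<L_{i'}$, to the order-$i'$ seed candidate $c+(j'+\tilde s+1)=w$ of $\Lambda$ itself. Hence for these ``small'' slots the left shift merely recycles the old-order seeds of type (1), and one only has to see that the bit survives the passage from $S$ to $\tilde S$: its entry index $j'+\tilde s+1$ exceeds $\tilde s\ge\min(\tilde s,L_{i'})$, so there $\tilde S$ and $S$ coincide.

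The substance lies in the ``new'' slots, where $j'+\tilde s+1\ge L_{i'}$ and the source index leaves the $i'$-th row of $\Lambda$. Here $w\ge c+L_{i'}$, so Lemma~\ref{cardinalseeds} forbids $w$ from being an order-$i'$ seed of $\Lambda$; type (1) is excluded and the true value is governed by conditions (2)--(4). For $i'<k-1$ I would single out the last slot $j'=L_{i'}-1$, where $w=\lambda_s+\lambda_{i'+1}-\lambda_{i'}$: its source index is $\lambda_{i'+1}+\tilde s$, which is the entry $\tilde s$ of row $i'+1$ and, not lying among the first $\min(\tilde s,L_{i'+1})$ entries, is preserved by $\tilde S$; this reproduces exactly condition (2), that $\lambda_s$ be an order-$(i'+1)$ seed of $\Lambda$. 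Every other new slot of row $i'$ has its source index inside the initial segment that $\tilde S$ clears to $0$, matching the absence of any applicable type.

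The main obstacle, and the reason the cut-off $\min(\tilde s,L_{i})$ appears in the definition of $\tilde S$, is the case of a short row, $\tilde s\ge L_{i'+1}$, in which the shift reaches clear across row $i'+1$ into still lower rows. I would dispose of this uniformly: a source position $\lambda_m+a$ with $0\le a\le\tilde s$ that reads into a new slot of an upper row and genuinely belongs to some later row $m'$ occurs there with entry index strictly below $\tilde s$ (since $\lambda_{m'}>\lambda_m$), hence sits in the cleared initial segment of row $m'$; and if it has left the table altogether it is already $0$. Simultaneously Lemma~\ref{cardinalseeds} rules out the corresponding type (2) seed once $\lambda_s$ is that large, so $0$ is indeed the correct value. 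Pinning the boundary between ``preserved'' (entry index exactly $\tilde s$) and ``cleared'' (index below $\tilde s$) is the delicate point, and it is the only place where the argument must look several rows ahead.

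Finally I would account for the trailing term $1\,1\,1\gg(c+\tilde s-2)$, which sets precisely the three top positions $c+\tilde s-2$, $c+\tilde s-1$, $c+\tilde s$ of the length-$(c+\tilde s+1)$ string. The shift alone gives $0$ from position $c$ onward, as those sources exceed $c$; so these bits come entirely from the trailing block. Comparing with the new-order lemma, the two ones of the last new row $i'=s-1$ and the single one of row $s-2$ sit exactly at these three positions, while the lower new rows $k\le i'\le s-3$ stay $0$. The same block supplies the boundary seeds (3) and (4) of the last old row $i'=k-1$: for $s=k+1$ its last slot falls on $c-1=c+\tilde s-2$, and for $s=k$ the two type-(4) slots fall on $c-1$ and $c$, both cases being covered. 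I would then record the specialization $\tilde s=0$, where nothing is cleared, $\tilde S=S$, and the formula collapses to $S_1\cdots S_{c-1}\,1\,1$, the forced bit at $c-2$ agreeing with $S_{c-1}=1$. Assembling these cases over all positions yields the stated equality.
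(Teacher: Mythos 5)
Your proof is correct and follows essentially the same route as the paper: the paper gives no separate proof of this lemma, presenting it as the string-level rephrasing of the table-of-seeds update procedure at the end of Section~\ref{results}, and your position-by-position verification against the two classification lemmas there (types (1)--(4) for old-order seeds, the new-order seeds, and Lemma~\ref{cardinalseeds}) is precisely the translation the paper leaves implicit. In particular, your handling of the short-row case via the cut-off $\min(\tilde s,\,\lambda_{i+1}-\lambda_i)$, of the entry-$\tilde s$ slot encoding condition (2), and of the three forced top bits covering conditions (3)--(4) and the new-order seeds, correctly fills in the corner cases the paper does not spell out.
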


\vspace{5truemm}

The string $\tilde S$ in this result satisfies 
\,$\tilde S_{\lambda_i+j}=0$\, \,for\, \,$1\leq i<k, \ 0\leq j<\tilde s$,\,
so it can be obtained by \emph{raking} $S(\Lambda)$
to get rid of the old-order seeds which must not be recycled.
This \emph{raking}\, process is performed by means of successive one-position shifts of \,$G(\Lambda)$ to the right,
as shown in the following pseudocode:

\vspace{5truemm}

{\footnotesize
\noindent{\bf Input:} \ $c:=c(\Lambda)$, \ \,$G:=G(\Lambda)$, \ \,$S:=S(\Lambda)$, \ \,$\tilde s$\\[5pt]
\noindent{\bf Output:} \ $c(\tilde\Lambda)$, \ \,$G(\tilde\Lambda)$, \ \,$S(\tilde \Lambda)$\\[-10pt]
\begin{enumerate}\itemsep0.9truemm
\item \ $\tilde{S} := S$
\item \ $\mbox{\tt rake} := G$
\item \ {\bf from} \ $1$ \ {\bf to} \ $\tilde s$ \ {\bf do}
\item \ \qquad $\mbox{\tt rake} := \mbox{\tt rake}\gg 1$
\item \ \qquad $\tilde S := \tilde S \ \& \ \mbox{\tt rake}$
\item \ {\bf return} \ $\tilde c:=c+\tilde s+1$, \ \,$G\mid \left(1\gg\tilde c-2\right)$, 
\ \,$(\tilde S\ll \tilde s+1) \mid \left(1\,1\,1\gg\tilde c-3\right)$ 
\end{enumerate}
}

\vspace{5truemm}

\noindent This is the basic \emph{descending step}\, for an algorithm that, given a level $\gamma>g(\Lambda)$,
computes the number $n_\gamma(\Lambda)$ of descendants of $\Lambda$ having genus $\gamma$. We reproduce
two different versions for the algorithm: the first one is based on a \emph{depth first search} exploration of
the nodes in the tree, and the second one is recursive.

\vspace{5truemm}

{\footnotesize
\noindent{\bf Input:} \ $\gamma$, \ \,$G(\Lambda)$, \ \,$S(\Lambda)$, \ \,$c(\Lambda)$, 
\ \,$g(\Lambda)$, \ \,$\lambda_1$\\[5pt]
\noindent{\bf Output:} \ $n_\gamma(\Lambda)$\\[-10pt]
\begin{enumerate}\itemsep0.8truemm
\item \ $g:=g(\Lambda)$  \qquad\quad $n:=0$
\item \ $G[g]:=G(\Lambda)$ \quad \ \,$S[g]:=S(\Lambda)$ \qquad $\mbox{\tt rake}[g] := G(\Lambda)$
\item \ $c[g]:=c(\Lambda)$ \qquad $m[g]:=\lambda_1$ \qquad\quad $\tilde s_{last} := \tilde s[g] := 0$
\item \ {\bf while} \ \,$g\neq g(\Lambda)-1$\, \ {\bf do}
\item \qquad {\bf while} \ \,$\tilde s[g]<m[g]$\, \ {\bf and} \ 
\,$S[g]\,\,\&\,\,(1\gg\tilde s[g]) \,=\, 0$\, \ {\bf do} \ \,$\tilde s[g] := \tilde s[g]+1$
\item \qquad {\bf if} \ \,$\tilde s[g] = m[g]$\, \ {\bf then}
\item \qquad\qquad\qquad $g := g-1$
\item \qquad\qquad\qquad $\tilde s_{last} := \tilde s[g]$
\item \qquad\qquad\qquad $\tilde s[g] := \tilde s[g]+1$
\item \qquad {\bf else} \ {\bf if} \ \,$g=\gamma-1$\, \ {\bf then}
\item \qquad\qquad\qquad $n := n+1$
\item \qquad\qquad\qquad $\tilde s[g] := \tilde s[g]+1$
\item \qquad\qquad \ \ {\bf else}
\item \qquad\qquad\qquad $c[g+1] := c[g]+\tilde s[g]+1$
\item \qquad\qquad\qquad {\bf if} \ \,$\tilde s[g] = 0$\, \ {\bf and} \ \,$m[g]=c[g]$\, \ {\bf then} \ \,$m[g+1] := c[g+1]$
\item \qquad\qquad\qquad {\bf else} \ \,$m[g+1] := m[g]$
\item \qquad\qquad\qquad {\bf from} \ \,$\tilde s_{last}+1$\, \ {\bf to} \ \,$\tilde s[g]$\, \ {\bf do}
\item \qquad\qquad\qquad\qquad $\mbox{\tt rake}[g] := \mbox{\tt rake}[g] \gg 1$
\item \qquad\qquad\qquad\qquad $S[g] := S[g] \,\,\&\,\, \mbox{\tt rake}[g]$
\item \qquad\qquad\qquad $S[g+1] := (S[g]\ll \tilde s[g]+1) \mid \left(1\,1\,1\gg c[g+1]-3\right)$
\item \qquad\qquad\qquad $G[g+1] := G[g] \mid \left(1\gg c[g+1]-2\right)$
\item \qquad\qquad\qquad $g := g+1$
\item \qquad\qquad\qquad $\mbox{\tt rake}[g] := G[g]$
\item \qquad\qquad\qquad $\tilde s_{last} := \tilde s[g] := 0$
\item \ {\bf return} \ $n$  
\end{enumerate}
}

\newpage

\noindent To produce the descendants from a given node of genus~$g$\, through this algorithm, 
the \emph{descending step} is not performed each time from scratch as it is given above. 
Indeed, the \emph{raking} process which must be made for a descendant need only be continued,
in the loop at lines~17--19, from the one already executed for the previous siblings, if any.
As a consequence, for each possible descendant, that is, for every non-negative value~\,$\tilde s[g]$ 
smaller than the multiplicity~\,$m[g]$,\, a limited number of operations are required: specifically, 
those at line 5 are carried out 
once for every such value, whereas those at lines 18 and 19 are carried out once for every value~\,$\tilde s[g]$ 
up to the last sibling, and those at the remaining lines inside the main loop are carried out at most once 
for each actual descendant.
Thus, the computation of \emph{all} descendants of the node requires $O(m[g])$ operations having
time complexity at most $O(\gamma)$. This is why, along with the fact that operations on binary strings
are very fast in practice, one should expect the algorithm to perform efficiently
when compared to other existing ones, for which either the construction of the data structure representing
a generic descendant takes time at least $O(\gamma\log\gamma)$, as in~\cite{FromentinHivert}, or otherwise 
the identification of its generators requires at best $O(\gamma)$ basic arithmetic operations, as in the 
setting given in \cite{Br:fibonacci,Br:bounds}.

\vspace{10truemm}

{\footnotesize
\noindent{\bf Recursive version \ $n_\gamma(G,\,S,\,c,\,g,\,m)$}\\[-8pt]
\begin{enumerate}\itemsep0.8truemm
\item \ {\bf if} \ \,$g=\gamma$\, \ {\bf then} \ \,$n:=1$
\item \ {\bf else} \ 
\item \qquad $n:=0$ \qquad $\tilde s_{last}:=0$ \qquad $\mbox{\tt rake} := G$
\item \qquad {\bf for} \ \,$\tilde s$\, \ {\bf from} \ \,$0$\, \ {\bf to} \ \,$m-1$\, \ {\bf do}
\item \qquad\qquad{\bf if} \ \,$S\,\,\&\,\,(1\gg\tilde s) \,\neq\, 0$\, \ {\bf then}
\item \qquad\qquad\qquad{\bf from} \ \,$\tilde s_{last}+1$\, \ {\bf to} \ \,$\tilde s$\, \ {\bf do}
\item \qquad\qquad\qquad\qquad $\mbox{\tt rake} := \mbox{\tt rake} \gg 1$
\item \qquad\qquad\qquad\qquad $S := S \,\,\&\,\, \mbox{\tt rake}$
\item \qquad\qquad\qquad $\tilde s_{last}:=\tilde s$
\item \qquad\qquad\qquad $\tilde c:=c+\tilde s+1$
\item \qquad\qquad\qquad {\bf if} \ \,$\tilde s =0$\, \ {\bf and} \ \,$m=c$\, \ {\bf then} \ \,$\tilde m:=\tilde c$\,
\ {\bf else} \ \,$\tilde m:=m$
\item \qquad\qquad\qquad $n:=n\,+\,n_\gamma\big(G\mid \left(1\gg\tilde c-2\right), \ 
\,(S\ll \tilde s+1) \mid \left(1\,1\,1\gg\tilde c-3\right), \  \,\tilde c, \ \,g+1, \ \,\tilde m\big)$
\item \ {\bf return} \ $n$
\end{enumerate}
}

\vspace{10truemm}


Let us conclude by comparing the running times of the two versions given above
for the seeds descending algorithm.
We also consider three known algorithms for computing the number of semigroups of
a given genus, which we briefly summarize as follows. We use the same notations
as in Sections~\ref{seeds} and \ref{results}.
\begin{itemize}

\item[$\bullet$] {\bf The \,\emph{Apéry set}\, algorithm.} 
A semigroup $\Lambda$ can be uniquely identified by its Apéry set 
\cite{Apery,RGS}, which is defined by taking, for each congruence class of integers modulo the multiplicity $\lambda_1$,
the smallest representative in~$\Lambda$. Then, this algorithm is based on the fact that
the generators of $\Lambda$ other than $\lambda_1$
necessarily lie in its Apéry set: they are the non-zero elements in the set which cannot be written, up to a multiple 
of~$\lambda_1$, as the sum of any two other non-zero elements in the set.
Thus, although the Apéry set of a descendant is immediately obtained 
from the Apéry set of $\Lambda$, the efficiency of this method is rather limited.

\item[$\bullet$] {\bf The \,\emph{generators tracking}\, algorithm.}
It is based on the construction given in \cite{Br:fibonacci,Br:bounds} 
for the semigroup tree
by keeping track of the set of generators at each descending step.
Specifically, the node attached to a semigroup $\Lambda$ can be encoded by its conductor \,$c$,
its multiplicity~$\lambda_1$ and a ternary array which is indexed 
by \,$1\leq i<c+\lambda_1$\, and whose value (say~\,$0$, $1$ or~$2$) at the $i$-th entry depends 
on whether \,$i$\, is a gap of $\Lambda$, a generator or a non-gap which is not a generator.
The array of a descendant $\tilde\Lambda=\Lambda\setminus\{\lambda_s\}$ inherits the values
from that of $\Lambda$ at every entry, except for the one with index~\,$i\!=\!\lambda_s$. 
In the very special case in which $\Lambda$ is ordinary and $s=1$, exactly
two new entries are required and both correspond to generators of~$\tilde\Lambda$.
Otherwise, \,$\lambda_s-c+1$\, new entries are required but only the last one
may correspond to a generator, according to Lemma 1 in~\cite{Br:bounds}.

\item[$\bullet$] {\bf The \,\emph{decomposition numbers}\, algorithm.} Fromentin and Hivert consider 
in \cite{FromentinHivert} the \emph{decomposition numbers}
$$d_\Lambda(x):=\#\left\{y\in \Lambda \,\mid\, x-y\in\Lambda, \ 2y\leq x\right\}$$
for \,$x\in\N$, attached to a semigroup $\Lambda$. These numbers, which are intimately related to the 
extensively used \,$\nu$-sequence in coding theory \cite{FR,KiPe,HoLiPe,Br:Acute,Br:ANote}, 
are exploited in a simple way in \cite{FromentinHivert} to efficiently explore the semigroup tree.
\end{itemize}

\vspace{1truemm}

\noindent For each algorithm, we made a first implementation using a 
\emph{depth first search} (DFS) exploration of the semigroup tree, as in \cite{FromentinHivert},
since this consumes much less memory than a \emph{breadth first search} exploration and so allows 
us to reach further in the computations.
We then implemented recursive versions for the algorithms.
All implementations were made in plain~\texttt{C}.
The following table displays the number of seconds that each of them required
to compute the number of semigroups of genus \,$g$\, for \,$30\leq g\leq 40$.

\vspace{6truemm}

\noindent\resizebox{\textwidth}{!}{\begin{tabular}{|l|r|r|r|r|r|r|r|r|r|r|r|}\hline
& {\bf 30} & {\bf 31} & {\bf 32} & {\bf 33} & {\bf 34} & {\bf 35} & {\bf 36} & {\bf 37} & 
{\bf 38} & {\bf 39} & {\bf 40} \\\hline
{\bf Ap\'ery} - DFS & 13 & 24 & 39 & 67 & 114 & 193 & 327 & 554 & 933 & 1577 & 2657 \\\hline
{\bf Ap\'ery} - recursive & 10 & 16 & 28 & 47 & 81 & 136 & 232 & 393 & 634 & 1071 & 1805 \\\hline
{\bf decomposition} - DFS & 10 & 16 & 27 & 46 & 79 & 131 & 222 & 373 & 626 & 1050 & 1762 \\\hline
{\bf generators} - DFS & 8 & 14 & 23 & 39 & 65 & 110 & 185 & 310 & 518 & 868 & 1448 \\\hline
{\bf decomposition} - recursive & 7 & 12 & 20 & 35 & 58 & 97 & 165 & 275 & 462 & 775 & 1297 \\\hline
{\bf generators} - recursive & 2 & 4 & 7 & 11 & 19 & 31 & 53 & 87 & 145 & 241 & 400 \\\hline
{\bf seeds} - DFS & 1 & 3 & 4 & 8 & 12 & 21 & 35 & 58 & 96 & 161 & 269 \\\hline
{\bf seeds} - recursive & 1 & 2 & 3 & 6 & 9 & 15 & 26 & 42 & 70 & 118 & 195 \\\hline
\end{tabular}}

\newpage



\vfill

\begin{footnotesize}
\begin{tabular}{l}
Maria Bras-Amor\'os\\
\texttt{maria.bras\,\footnotesize{$@$}\,urv.cat}\\[5pt]
Departament d'Enginyeria Inform\`atica i  Matem\`atiques  \\
Universitat Rovira i Virgili\\
Avinguda dels Països Catalans, 26\\
E-43007 Tarragona\\[30pt]
Julio Fern\'andez-Gonz\'alez\\
\texttt{julio.fernandez.g\,\footnotesize{$@$}\,upc.edu}\\[5pt]
Departament de Matem\`atiques  \\
Universitat Polit\`ecnica de Catalunya  \\
EPSEVG -- Avinguda V\'ictor Balaguer, 1\\
E-08800 Vilanova i la Geltr\'u\\[20pt]
\end{tabular}
\end{footnotesize}

\end{document}